\documentclass[10pt]{article}
\usepackage{amsfonts, amsmath, amssymb, amscd, latexsym}
\usepackage{geometry}                
\geometry{letterpaper}                  
\usepackage{graphicx}
\usepackage{amssymb}
\usepackage{epstopdf}
\usepackage{hyperref}
\usepackage[utf8]{inputenc}
\usepackage{tikz}
\usepackage{mathtools}
\usepackage{array}

\usepackage{calrsfs}
\usepackage[cal=boondoxo]{mathalfa}

\usepackage{amsthm}
\usepackage{multirow}
\usepackage{tabu}
\usepackage{diagbox}

\usepackage{comment}
\usepackage{breqn}
\usepackage{nth}

\newtheorem{thm}{Theorem}
\newtheorem{lem}[thm]{Lemma}
\newtheorem{cor}[thm]{Corollary}
\newtheorem{prop}[thm]{Proposition}

\theoremstyle{definition}
\newtheorem{definition}{Definition}

\theoremstyle{definition}

\newcommand{\Z}{\mathbb{Z}}
\newcommand{\R}{\mathbb{R}}
\newcommand{\N}{\mathbb{N}}


\setlength\parindent{0pt}

\DeclarePairedDelimiter\ceil{\lceil}{\rceil}
\DeclarePairedDelimiter\floor{\lfloor}{\rfloor}

\newcommand{\cfr}[2]{\ceil*{\frac{#1}{#2}} }
\newcommand{\lfr}[2]{\floor*{\frac{#1}{#2}} }
\newcommand{\fra}[2]{\{#1; #2\}}


\author{Francisco Franco Munoz}
\date{}							
\title{\bf On the counting function for numerical monoids}

\begin{document}

\newcommand{\Addresses}{{
  \bigskip
  \footnotesize

Francisco Franco Munoz, \textsc{Department of Mathematics, University of Washington,
    Seattle, WA 98195}\par\nopagebreak
  \textit{E-mail address}: \texttt{ffm1@uw.edu}
}}

\maketitle

\abstract{We provide explicit expressions for the counting function and main terms for numerical monoids.}

\section{Introduction}

The study of numerical monoids is a huge area of research (see \cite{RS} and references therein). A numerical monoid $M$ is a subset of $\N$ containing $0$ that is closed under addition and finite complement. In other words, a finite complement submonoid of $(\N,+)$. 

\noindent The aim of this paper is to calculate in an elementary way the main term of the counting function of a monoid (Theorems \ref{thm-5}, \ref{thm-13}, \ref{thm-14}) which is a natural object. We'll avoid the use of more advanced treatments such as quasi-polynomial functions and integer points of polytopes (\cite{BLDPS}) that provide a more comprehensive view of the subject. The paper is essentially self-contained and requires no prior knowledge of the subject. 

\subsection{Notation}

$\N = \{0, 1, 2, 3, ...\}$, the natural numbers \\
$\Z= -\N \cup \N$, the integers\\
$\N_{+} = \{1, 2, 3, ...\}$, the positive integers 

\section{Setting}

\noindent Let $M = \langle a_1, \dots, a_{n-1}, a_n \rangle$ be a submonoid of $(\N, +)$. By definition $M$ is called a numerical monoid if its complement is finite. Denote by $M_{\widehat{a_n}} = \langle a_1, \dots, a_{n-1} \rangle $ the submonoid obtained by dropping the generator $a_n$.

\begin{definition} For $x\in \R$, $a\in \N_{+}$, define $\displaystyle \fra{x}{a} = x-a\lfr{x}{a} $. 
\end{definition}

With this definition, we have $\{x\} = \fra{x}{1}$ the fractional part of $x$. 

\begin{prop}\label{prop-1}
Let $x, k\in \Z$, $a, b\in \N_{>0}$

\begin{enumerate}

\item $\fra{x}{a}$ is the residue of $x$ modulo $a$

\item $0\leq \fra{x}{a} < a$

\item $\fra{x}{a}=0 \iff a | x$. 

\item $\fra{x+ka}{a}=\fra{x}{a}$

\item $\fra{x}{b} = \fra{\fra{x}{ab}}{b}$

\end{enumerate}
\end{prop}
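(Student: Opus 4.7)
The plan is to base everything on the division algorithm and the basic identity $\lfloor y + k\rfloor = \lfloor y\rfloor + k$ for $k\in\Z$, and then feed each part into the next rather than proving the five items in isolation.

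First I would handle (2), since the other claims lean on it. From the definition of the floor function, $\lfloor x/a\rfloor \leq x/a < \lfloor x/a\rfloor + 1$; multiplying by $a > 0$ and subtracting $a\lfloor x/a\rfloor$ yields $0\leq \fra{x}{a} < a$. For (1), note that by the division algorithm there exist unique integers $q,r$ with $x = aq + r$ and $0 \leq r < a$; rearranging gives $q = (x-r)/a$, and because $0\leq r/a < 1$ we conclude $q = \lfloor x/a\rfloor$, whence $r = x - a\lfloor x/a\rfloor = \fra{x}{a}$. So $\fra{x}{a}$ is literally the residue of $x$ modulo $a$. Item (3) is then immediate: by (1) and (2), $\fra{x}{a}$ is the unique integer in $[0,a)$ congruent to $x \pmod a$, and it vanishes precisely when $a\mid x$.

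For (4), I would just compute $\lfloor (x+ka)/a\rfloor = \lfloor x/a + k\rfloor = \lfloor x/a\rfloor + k$ using the integer-shift property of the floor, and then
\[
\fra{x+ka}{a} = (x+ka) - a\bigl(\lfloor x/a\rfloor + k\bigr) = x - a\lfloor x/a\rfloor = \fra{x}{a}.
\]

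The only part with any subtlety is (5). Here I would write $x = ab\,q + \fra{x}{ab}$ using (1) applied to the modulus $ab$, where $q = \lfloor x/(ab)\rfloor \in \Z$. Then $x$ and $\fra{x}{ab}$ differ by the integer multiple $(aq)\cdot b$ of $b$, so (4) applied with modulus $b$ and shift $k = aq$ gives $\fra{x}{b} = \fra{\fra{x}{ab}}{b}$. The main obstacle, if any, is just keeping the roles of the two moduli straight; conceptually this is reducing mod $ab$ first and then mod $b$, and it works because $b\mid ab$.
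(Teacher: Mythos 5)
Your proof is correct, and it simply spells out the routine floor-function and division-algorithm verifications that the paper dismisses with the single word ``Immediate.'' Each step checks out, including the mild subtlety in (5), where applying (4) with shift $k=a\lfloor x/(ab)\rfloor\in\Z$ (possibly negative) is exactly what is needed; nothing further is required.
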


\begin{proof} Immediate.
\end{proof}

\begin{lem} \label{lem-2} 
For $y\in \N$, if $y<a_n$, then $y\in M \iff y\in M_{\widehat{a_n}}$. In particular, for $x\in \N$,  $\fra{x}{a_n} \in M \iff \fra{x}{a_n} \in M_{\widehat{a_n}}$.
\end{lem}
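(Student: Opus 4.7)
The plan is to prove the two directions of the biconditional separately, using that elements of $M$ are non-negative integer linear combinations of $a_1,\ldots,a_n$.

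The reverse direction $y\in M_{\widehat{a_n}} \Rightarrow y\in M$ is immediate: any expression $y = c_1 a_1 + \cdots + c_{n-1} a_{n-1}$ with $c_i \in \N$ is also a representation of $y$ as a non-negative combination of $a_1,\ldots,a_n$ (take $c_n = 0$), so $M_{\widehat{a_n}} \subseteq M$ in general, regardless of the hypothesis $y < a_n$.

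For the forward direction, suppose $y \in M$ with $y < a_n$. Write $y = c_1 a_1 + \cdots + c_{n-1} a_{n-1} + c_n a_n$ with each $c_i \in \N$. Since each $a_i > 0$ and each $c_i \ge 0$, we have $c_n a_n \le y < a_n$, which forces $c_n = 0$. Therefore $y = c_1 a_1 + \cdots + c_{n-1} a_{n-1} \in M_{\widehat{a_n}}$. This is the key (and only) step requiring the hypothesis $y < a_n$.

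For the ``in particular'' statement, apply the just-proved equivalence to $y = \fra{x}{a_n}$. By Proposition \ref{prop-1}(2), we have $0 \le \fra{x}{a_n} < a_n$, so $\fra{x}{a_n} \in \N$ satisfies the hypothesis, giving $\fra{x}{a_n} \in M \iff \fra{x}{a_n} \in M_{\widehat{a_n}}$. There is no real obstacle here; the only thing to be careful about is ensuring the generators $a_i$ are strictly positive so that the bound $c_n a_n \le y$ forces $c_n = 0$, which is implicit in $M$ being a numerical monoid (the generators must all be positive for the complement of $M$ in $\N$ to be finite).
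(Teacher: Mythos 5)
Your proof is correct and is exactly the argument the paper leaves implicit (its proof is just ``This is clear.''): the inclusion $M_{\widehat{a_n}}\subseteq M$ in one direction, and in the other the bound $c_n a_n\le y<a_n$ forcing $c_n=0$, then applying Proposition \ref{prop-1} to $\fra{x}{a_n}$. Nothing further is needed.
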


\begin{proof} This is clear.
\end{proof}

\begin{definition} We define the counting function $$T_{M}(x) = \# \{(x_1, ... , x_n) \in \N^n  \mid  x_1a_1 + ... + x_na_n = x \}$$
\end{definition}

The Theory of Erhart polynomials \cite{BLDPS} provides ways to determine $T_M$. Instead we'll address them directly in an elementary way.

\begin{thm} 
 $$ T_{M}(x)=T_{M}(x-a_n)+T_{M_{\widehat{a_n}}}(x)$$  
\end{thm}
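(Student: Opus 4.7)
The plan is to prove the identity by setting up an explicit bijection reflecting the two possibilities for the last coordinate $x_n$ in any representation counted by $T_M(x)$.

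First I would partition the solution set
\[
S_M(x) = \{(x_1,\dots,x_n)\in\N^n \mid x_1a_1 + \cdots + x_na_n = x\}
\]
into $S_M(x) = A \sqcup B$, where $A$ consists of the tuples with $x_n \geq 1$ and $B$ consists of the tuples with $x_n = 0$. The cardinality $T_M(x)$ is then $|A| + |B|$, and the proof reduces to showing $|A| = T_M(x-a_n)$ and $|B| = T_{M_{\widehat{a_n}}}(x)$.

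For $B$, the identification is immediate: a tuple with $x_n = 0$ satisfying $x_1 a_1 + \cdots + x_{n-1}a_{n-1} = x$ is, by definition, precisely a tuple counted by $T_{M_{\widehat{a_n}}}(x)$, giving $|B| = T_{M_{\widehat{a_n}}}(x)$. For $A$, I would exhibit the bijection
\[
\varphi \colon A \longrightarrow S_M(x-a_n), \qquad (x_1,\dots,x_{n-1},x_n)\longmapsto (x_1,\dots,x_{n-1},x_n-1),
\]
noting that $\varphi$ is well-defined because $x_n \geq 1$ forces $x_n - 1 \in \N$, and that after subtracting $a_n$ from both sides the new tuple sums to $x-a_n$. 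The inverse $(y_1,\dots,y_n)\mapsto (y_1,\dots,y_{n-1},y_n+1)$ is obvious, so $|A| = T_M(x-a_n)$.

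The only subtle point is the boundary case $x < a_n$, in which $x - a_n$ is negative and $T_M(x-a_n)$ is formally undefined by the stated definition. I would handle this by extending $T_M$ to be $0$ on negative integers; then $A = \emptyset$ (since $x_n \geq 1$ would force $x_1a_1+\cdots+x_na_n \geq a_n > x$), and both sides of the identity agree. No substantive obstacle arises beyond this minor convention, so the proof is essentially a clean counting argument.
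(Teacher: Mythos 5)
Your proof is correct and follows essentially the same route as the paper: partitioning the solution set according to whether $x_n=0$ or $x_n\geq 1$ and shifting $x_n$ by one in the latter case. The only addition is your explicit convention that $T_M$ vanishes on negative arguments, a minor point the paper leaves implicit.
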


\begin{proof} This is immediate since the set $ \{(x_1, ... , x_n) \in \N^n  \mid  x_1a_1 + ... + x_na_n = x \}$ can be partitioned as  $ \{(x_1, ... , x_n) \in \N^n  \mid  x_1a_1 + ... + x_na_n = x \} = $  $$=\{(x_1, ... , x_n) \in \N^n  \mid  x_n > 0 , x_1a_1 + ... + x_na_n = x \}\sqcup  \{(x_1, ... , x_n) \in \N^n  \mid  x_n=0, x_1a_1 + ... + x_na_n = x\}$$ $= \{(x_1, ... , x_{n-1}, y_n) \in \N^n  \mid  y_n=x_n-1\geq 0 , x_1a_1 + ... + x_{n-1}a_{n-1}+y_na_n = x-a_n \}\sqcup \{(x_1, ... , x_{n-1}, 0) \in \N^n  \mid  x_n=0, x_1a_1 + ... + x_{n-1}a_{n-1}+0 = x\}$ and the counting for each is $T_{M}(x-a_n)+T_{M_{\widehat{a_n}}}(x)$.

\end{proof}

\begin{cor}\label{cor-4}

 $\displaystyle T_{M} (x) = \sum_{k=0}^{\lfr{x}{a_n}} T_{M_{\widehat{a_n}}}(x-ka_n) =  \sum_{l=0}^{\lfr{x}{a_n}} T_{M_{\widehat{a_n}}}(\fra{x}{a_n}+la_n) $
 \end{cor}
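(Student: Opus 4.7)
My plan is to iterate the recurrence established in the preceding theorem,
\[ T_M(x) = T_M(x-a_n) + T_{M_{\widehat{a_n}}}(x), \]
and then use the fact that $T_M$ and $T_{M_{\widehat{a_n}}}$ vanish on negative arguments (the defining set of tuples is empty) to truncate the resulting sum at the right index.

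First I would prove by induction on $m \geq 0$ the telescoped identity
\[ T_M(x) = T_M(x - m a_n) + \sum_{k=0}^{m-1} T_{M_{\widehat{a_n}}}(x - k a_n). \]
The base case $m=0$ is trivial, since the sum is empty. The inductive step is immediate: apply the recurrence to the $T_M(x - m a_n)$ term and absorb the new $T_{M_{\widehat{a_n}}}(x - m a_n)$ into the sum.

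Next, I would specialize to $m = \lfr{x}{a_n} + 1$. By item 2 of Proposition \ref{prop-1}, we have $x - m a_n = \fra{x}{a_n} - a_n < 0$, so $T_M(x - m a_n) = 0$. This collapses the identity to
\[ T_M(x) = \sum_{k=0}^{\lfr{x}{a_n}} T_{M_{\widehat{a_n}}}(x - k a_n), \]
which is the first equality of the corollary.

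For the second equality, I would reindex the sum by $l = \lfr{x}{a_n} - k$. Then $x - k a_n = x - \lfr{x}{a_n} a_n + l a_n = \fra{x}{a_n} + l a_n$, and as $k$ runs from $0$ to $\lfr{x}{a_n}$, the new index $l$ sweeps the same range in reverse order, yielding the claimed expression. The whole argument is essentially mechanical; the only step that requires any care is pinning down the correct stopping index $\lfr{x}{a_n}$, which is dictated precisely by the threshold at which $x - m a_n$ turns negative.
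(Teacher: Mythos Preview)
Your proof is correct and follows the same telescoping strategy as the paper. The only difference is in how the boundary term is disposed of: the paper stops the telescope at $m=\lfr{x}{a_n}$, leaving the residual term $T_M(\fra{x}{a_n})$, and then invokes Lemma~\ref{lem-2} (since $\fra{x}{a_n}<a_n$) to rewrite it as $T_{M_{\widehat{a_n}}}(\fra{x}{a_n})$ and absorb it into the sum. You instead push the telescope one step further to $m=\lfr{x}{a_n}+1$ and kill the residual term by observing that $T_M$ vanishes at the negative argument $\fra{x}{a_n}-a_n$. Your route is marginally cleaner in that it bypasses Lemma~\ref{lem-2} entirely; the paper's route has the mild advantage that every application of the recurrence and every evaluation of $T_M$ stays at a nonnegative argument, so one never needs to extend $T_M$ (even trivially) outside $\N$. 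The reindexing for the second equality is identical in both.
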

 
\begin{proof}

$\displaystyle T_{M} (x) - T_{M}\left(x-a_n\floor*{\frac{x}{a_n}}\right) =  \sum_{k=0}^{ \lfr{x}{a_n} -1} \left[T_{M}(x-ka_n)-T_{M}(x-(k+1)a_n)\right] \\=\sum_{k=0}^{\lfr{x}{a_n} - 1} T_{M_{\widehat{a_n}}}(x-ka_n)$. Also $T_{M}\left(x-a_n\floor*{\frac{x}{a_n}}\right)=T_{M}(\fra{x}{a_n})=T_{M_{\widehat{a_n}}}(\fra{x}{a_n}) = T_{M_{\widehat{a_n}}}\left(x-a_n\floor*{\frac{x}{a_n}}\right)$ by Lemma \ref{lem-2} and we conclude that $\displaystyle T_{M} (x) = \sum_{k=0}^{\lfr{x}{a_n}} T_{M_{\widehat{a_n}}}(x-ka_n)$ and making the change of variable $k=\lfr{x}{a_n}-l$ we get the last equality.

\end{proof}


\section{Dimension $n=2$}

$M = \langle a, b \rangle$ and assume that $\gcd(a,b)=1$ in this subsection.

\begin{thm} \label{thm-5}
We have $$ T_{\langle a, b \rangle} (x) =  \lfr{x}{ab} + \epsilon_{\langle a, b\rangle}(\fra{x}{ab})$$ where for $0\leq y < ab$, $\epsilon(y)=\delta(y\in \langle a, b\rangle)$ is the indicator function. 
\end{thm}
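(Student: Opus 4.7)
The plan is to apply Corollary~\ref{cor-4} with $a_n = b$, so that $M_{\widehat{a_n}} = \langle a \rangle$ and $T_{\langle a \rangle}(y) = [a \mid y]$ (Iverson bracket). This reduces the claim to the combinatorial identity
\[
\#\bigl\{\, l \in \{0, 1, \ldots, \lfloor x/b \rfloor\} : a \mid \fra{x}{b} + lb \,\bigr\} = \lfr{x}{ab} + \epsilon_{\langle a,b\rangle}\bigl(\fra{x}{ab}\bigr).
\]

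Because $\gcd(a,b) = 1$, $b$ is invertible modulo $a$, so the congruence $bl \equiv -\fra{x}{b} \pmod a$ has a unique solution $l_0 \in \{0, 1, \ldots, a-1\}$, and the valid indices form the arithmetic progression $l_0, l_0 + a, l_0 + 2a, \ldots$. Set $q = \lfr{x}{ab}$ and $r = \fra{x}{ab}$; Proposition~\ref{prop-1}(5) gives $\fra{x}{b} = \fra{r}{b}$, and from $x = abq + r$ we also read off $\lfloor x/b \rfloor = aq + \lfloor r/b \rfloor$ with $0 \leq \lfloor r/b \rfloor \leq a-1$. Counting terms $l_0 + ak$ in $\{0, \ldots, aq + \lfloor r/b \rfloor\}$ therefore yields exactly $q+1$ when $\lfloor r/b \rfloor \geq l_0$ and $q$ when $\lfloor r/b \rfloor < l_0$.

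It remains, and this is the main step, to identify the correction $[\lfloor r/b \rfloor \geq l_0]$ with $\epsilon_{\langle a,b\rangle}(r) = [r \in \langle a, b\rangle]$. The approach is to work directly with representations: because $r < ab$, any decomposition $r = x_1 a + x_2 b$ with $x_1, x_2 \geq 0$ forces $x_2 < a$, so $r \in \langle a, b \rangle$ iff there exists $x_2 \in \{0, \ldots, a-1\}$ with $a \mid r - x_2 b$ and $x_2 b \leq r$. Writing $r - x_2 b = b(\lfloor r/b\rfloor - x_2) + \fra{r}{b}$ and invoking the defining congruence of $l_0$, the divisibility condition becomes $x_2 \equiv \lfloor r/b \rfloor - l_0 \pmod a$, so $x_2$ is uniquely determined; the inequality $x_2 b \leq r$ then simplifies to $x_2 \leq \lfloor r/b \rfloor$, which holds precisely when $\lfloor r/b \rfloor \geq l_0$ (in the opposite case $x_2 = a + \lfloor r/b\rfloor - l_0 \geq \lfloor r/b\rfloor + 1$ forces $x_2 b > r$). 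Combining everything gives $T_M(x) = q + \epsilon_{\langle a,b\rangle}(r)$, which is the desired formula.
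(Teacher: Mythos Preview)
Your proof is correct and follows essentially the same approach as the paper: both start from Corollary~\ref{cor-4} with $a_n=b$, reduce to counting $l\in\{0,\dots,\lfloor x/b\rfloor\}$ with $a\mid \fra{x}{b}+lb$, and exploit that exactly one residue in each block of $a$ consecutive $l$'s works. The paper organizes this via an explicit block decomposition and two lemmas (the second, identifying the tail sum with $\epsilon_{\langle a,b\rangle}(\fra{x}{ab})$, has its proof omitted), whereas you parametrize the solution set as the arithmetic progression $l_0,l_0+a,\dots$ and then give a complete argument for the correction term $[\lfloor r/b\rfloor\ge l_0]=[r\in\langle a,b\rangle]$; in that sense your write-up actually fills in the gap the paper leaves.
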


\begin{proof}
Applying Corollary \ref{cor-4} with $n=2$ and $a_2=b$, we have $\displaystyle T_{\langle a, b \rangle}(x) =  \sum_{l=0}^{\lfr{x}{b}} T_{M_{\widehat{b}}}(\fra{x}{b}+lb) = \sum_{l=0}^{\lfr{x}{b}} T_{\langle a\rangle}(\fra{x}{b}+lb)$. Now, it's clear that $T_{\langle a \rangle }(y)$ is $1$ or $0$ according to whether $a|y$ or not and so $T_{\langle a\rangle}(y+ka)=T_{\langle a\rangle}(y)$. Let $y=\lfr{x}{b}$. By Proposition \ref{prop-1}, $\lfr{\lfr{x}{b}}{a}=\lfr{x}{ab}$, so $y=\fra{y}{a} + a\lfr{x}{ab}$. We split the sum $\displaystyle \sum_{l=0}^{\lfr{x}{b}} T_{\langle a\rangle}(\fra{x}{b}+lb) =  \sum_{i=0}^{\lfr{x}{ab} - 1} \sum_{j=0}^{a-1} T_{\langle a\rangle}\left(\fra{x}{b}+(ai+j)b\right) + \sum_{j=0}^{\fra{y}{a}} T_{\langle a\rangle}(\fra{x}{b}+(a\lfr{x}{ab}+j)b) =   \sum_{i=0}^{\lfr{x}{ab} - 1} \sum_{j=0}^{a-1} T_{\langle a\rangle}(\fra{x}{b}+jb) + \sum_{j=0}^{\fra{y}{a}} T_{\langle a\rangle}(\fra{x}{b}+jb) = \lfr{x}{ab}\sum_{j=0}^{a-1} T_{\langle a\rangle}(\fra{x}{b}+jb) + \sum_{j=0}^{\fra{y}{a}} T_{\langle a\rangle}(\fra{x}{b}+jb)$ and the proof follows from the next two lemmas.
\end{proof}

\begin{lem} For any $y\in \N$,  $\displaystyle \sum_{j=0}^{a-1} T_{\langle a\rangle}(y+jb)=1$
\begin{proof} Since $\gcd(a,b)=1$, there's unique  $j, 0\leq j\leq a-1$ such that $a\mid (y+jb)$.
\end{proof}
\end{lem}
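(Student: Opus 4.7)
The plan is to reduce the sum to a counting question about residues modulo $a$. By the observation recorded in the proof of Theorem \ref{thm-5}, $T_{\langle a\rangle}(z) \in \{0,1\}$ with value $1$ precisely when $a \mid z$. Therefore
\[
\sum_{j=0}^{a-1} T_{\langle a\rangle}(y+jb) = \#\bigl\{\, j \in \{0,1,\dots,a-1\} : a \mid (y+jb)\,\bigr\},
\]
and my task reduces to showing that this cardinality is exactly $1$.

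Next I would invoke the coprimality assumption $\gcd(a,b)=1$, which means $b$ is a unit in $\Z/a\Z$. Hence the map $j \mapsto jb \pmod{a}$ is a bijection on $\{0,1,\dots,a-1\}$, so the set $\{jb \bmod a : 0 \le j \le a-1\}$ is a complete residue system modulo $a$. In particular, there is a unique $j_0 \in \{0,1,\dots,a-1\}$ with $j_0 b \equiv -y \pmod{a}$, i.e.\ with $a \mid (y + j_0 b)$. This gives the cardinality $1$ claimed above, completing the proof.

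I do not anticipate any real obstacle here: the only ingredients are the trivial characterization of $T_{\langle a\rangle}$ and the elementary fact that multiplication by a unit permutes residues. The proof given in the excerpt is essentially this one-liner, and my version simply spells out the two steps explicitly.
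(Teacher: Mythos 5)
Your proposal is correct and is essentially the paper's own argument: the paper likewise uses the characterization $T_{\langle a\rangle}(z)=\delta(a\mid z)$ from the proof of Theorem \ref{thm-5} and the fact that $\gcd(a,b)=1$ forces a unique $j\in\{0,\dots,a-1\}$ with $a\mid(y+jb)$. You merely make explicit the bijection $j\mapsto jb \bmod a$, which the paper leaves implicit.
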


\begin{lem} For any $x\in \N$, then $\displaystyle \sum_{j=0}^{\fra{y}{a}} T_{\langle a\rangle}(\fra{x}{b}+jb) = \epsilon_{\langle a, b\rangle}(\fra{x}{ab})$ where $y=\lfr{x}{b}$.
\end{lem}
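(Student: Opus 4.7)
The plan is to interpret each side as an indicator of the same condition and compare. On the left, since $T_{\langle a\rangle}(m)$ is $1$ if $a\mid m$ and $0$ otherwise, the sum counts those $j\in\{0,1,\dots,\fra{y}{a}\}$ with $a\mid \fra{x}{b}+jb$. Since $\gcd(a,b)=1$ and $\fra{y}{a}\le a-1$, the previous lemma (applied to the range $\{0,\dots,a-1\}\supseteq \{0,\dots,\fra{y}{a}\}$) gives a unique $j_0\in\{0,\dots,a-1\}$ with $a\mid \fra{x}{b}+j_0b$, so the sum equals $1$ if $j_0\le \fra{y}{a}$ and $0$ otherwise.

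Next, I would establish the key identity
\[
\fra{x}{ab} \;=\; \fra{x}{b} + \fra{y}{a}\cdot b.
\]
This comes from writing $x = yb + \fra{x}{b}$ and $y = a\lfr{y}{a}+\fra{y}{a}$, then using $\lfr{\lfr{x}{b}}{a}=\lfr{x}{ab}$ from Proposition \ref{prop-1} to substitute $\lfr{y}{a}=\lfr{x}{ab}$, giving $x = ab\lfr{x}{ab}+\fra{y}{a}\,b+\fra{x}{b}$. Because $0\le \fra{x}{b}<b$, this also reads off the quotient $\lfr{\fra{x}{ab}}{b}=\fra{y}{a}$.

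The main step is now to check that $\fra{x}{ab}\in\langle a,b\rangle$ if and only if $j_0\le \fra{y}{a}$. One direction: $\fra{x}{b}+j_0b$ is a nonnegative multiple of $a$ and $\equiv x\pmod b$; if $j_0\le \fra{y}{a}$, then $\fra{x}{ab}=(\fra{x}{b}+j_0b)+(\fra{y}{a}-j_0)b$ exhibits $\fra{x}{ab}$ as a nonnegative combination of $a$ and $b$. Conversely, if $\fra{x}{ab}=sa+tb$ with $s,t\ge0$, then $t\le \lfr{\fra{x}{ab}}{b}=\fra{y}{a}$ and $a\mid \fra{x}{ab}-tb=\fra{x}{b}+(\fra{y}{a}-t)b$; since the unique element of $\{0,\dots,a-1\}$ with this property is $j_0$, we get $j_0=\fra{y}{a}-t\le \fra{y}{a}$.

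The main obstacle is the second direction of this equivalence: one must argue that any representation $sa+tb$ of $\fra{x}{ab}$ forces $t\le\fra{y}{a}$ (so that the witness lies in the summation range) and that the unique mod-$a$ witness $j_0$ falls in place. Once the identity $\fra{x}{ab}=\fra{x}{b}+\fra{y}{a}\,b$ is in hand the bookkeeping becomes routine, and combining with the first paragraph gives both sides equal to $\delta(j_0\le\fra{y}{a})=\epsilon_{\langle a,b\rangle}(\fra{x}{ab})$.
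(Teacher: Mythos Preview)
Your argument is correct. The paper itself omits the proof of this lemma, so there is no approach to compare against; your write-up supplies exactly the missing details. The key steps---identifying the unique $j_0\in\{0,\dots,a-1\}$ with $a\mid \fra{x}{b}+j_0b$, establishing the identity $\fra{x}{ab}=\fra{x}{b}+\fra{y}{a}\,b$ (whence $\lfr{\fra{x}{ab}}{b}=\fra{y}{a}$), and then checking that $\fra{x}{ab}\in\langle a,b\rangle$ iff $j_0\le\fra{y}{a}$---are all sound. In the converse direction your bound $t\le\lfr{\fra{x}{ab}}{b}=\fra{y}{a}$ is what pins $j':=\fra{y}{a}-t$ into $\{0,\dots,a-1\}$ so that uniqueness of $j_0$ applies; this is the only place one could slip, and you handled it correctly.
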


\begin{proof} Omitted. 
\end{proof}

\subsection{A different proof}

\begin{lem}\label{lem-8} If $x=\alpha a+\beta b\in M=\langle a, b\rangle$, then $T_{M}(x)=\lfr{\alpha}{b}+\lfr{\beta}{a}+1$
\end{lem}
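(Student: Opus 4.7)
The plan is to parametrize all representations $x = x_1 a + x_2 b$ with $x_1, x_2 \in \N$ in terms of the given one, and then count. Concretely, if $(x_1, x_2)$ is any such representation, subtracting from $x = \alpha a + \beta b$ gives $(x_1 - \alpha)a = (\beta - x_2)b$. Because $\gcd(a,b) = 1$, we have $b \mid (x_1 - \alpha)$, so there exists a unique $k \in \Z$ with $x_1 = \alpha - kb$, and then automatically $x_2 = \beta + ka$. Conversely, for every $k \in \Z$ the pair $(\alpha - kb,\, \beta + ka)$ is an integer solution of $x_1 a + x_2 b = x$. Hence $T_M(x)$ equals the number of integers $k$ for which $\alpha - kb \geq 0$ and $\beta + ka \geq 0$.

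These two inequalities translate to $-\beta/a \leq k \leq \alpha/b$. Since $\alpha, \beta, a, b$ are integers with $a,b \geq 1$, the integer $k$'s in this range are exactly those with $-\lfloor \beta/a \rfloor \leq k \leq \lfloor \alpha/b \rfloor$, using the standard identity $\lceil -\beta/a \rceil = -\lfloor \beta/a \rfloor$. The count is therefore $\lfloor \alpha/b \rfloor - (-\lfloor \beta/a \rfloor) + 1 = \lfloor \alpha/b \rfloor + \lfloor \beta/a \rfloor + 1$, which is exactly the claimed value.

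There is essentially no obstacle here: the entire argument rests on the one-dimensional family of solutions to $x_1 a + x_2 b = x$ generated by coprimality, and the rest is just counting lattice points in an interval. The only small care needed is verifying the endpoint $k = -\lfloor \beta/a \rfloor$ is admissible (which holds since $\beta + a \cdot (-\lfloor \beta/a \rfloor) = \fra{\beta}{a} \geq 0$ by Proposition \ref{prop-1}), and similarly at the upper end.
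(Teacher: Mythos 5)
Your proof is correct and follows essentially the same route as the paper: parametrize all representations by a single integer parameter via coprimality, translate nonnegativity into an interval for that parameter, and count using $\lceil -\beta/a \rceil = -\lfloor \beta/a \rfloor$. No issues.
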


\begin{proof} Fix a representation  $x=\alpha a+\beta b$. We need to count representations $x=\alpha_1 a+\beta_1 b$. But $\alpha a+\beta b=\alpha_1 a+\beta_1 b \iff (\alpha-\alpha_1)a=(\beta_1-\beta)b$ and since $\gcd(a,b)=1$ this happens $\iff a\mid (\beta_1-\beta), b\mid (\alpha-\alpha_1)$ and $\beta_1-\beta=ja, \alpha-\alpha_1 = jb$. So we need to count the possibilities for $j\in \Z$. We have $\beta_1=ja+\beta$, $\alpha_1=-jb+\alpha$. And the condition is exactly $\beta_1\geq 0$ and $\alpha_1\geq 0$, which happens $\iff ja+\beta\geq 0, -jb+\alpha\geq 0 \iff \frac{-\beta}{a}\leq j\leq \frac{\alpha}{b} \iff \cfr{-\beta}{a}\leq j\leq \lfr{\alpha}{b} $. Now we have $ \cfr{-\beta}{a} = - \lfr{\beta}{a}$ hence there are $\lfr{\alpha}{b}+\lfr{\beta}{a}+1$ such $j$.
\end{proof}

\begin{cor}\label{cor-9} For $x<ab$, $T_{M}(x)=1 \iff x\in M$.
\end{cor}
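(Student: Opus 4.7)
The plan is to apply Lemma \ref{lem-8} to a carefully chosen representation of $x$. The forward direction is immediate: $T_M(x) = 1$ entails the existence of at least one nonnegative representation of $x$, so $x \in M$. For the converse, assume $x \in M$ with $x < ab$; I aim to produce a representation $x = \alpha a + \beta b$ with $0 \leq \alpha < b$ and $0 \leq \beta < a$, because Lemma \ref{lem-8} will then yield $T_M(x) = \lfloor \alpha/b \rfloor + \lfloor \beta/a \rfloor + 1 = 1$.

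To construct $\alpha$, I would use that $\gcd(a,b) = 1$ makes $a$ invertible modulo $b$, so there is a unique $\alpha \in \{0, 1, \dots, b-1\}$ with $\alpha a \equiv x \pmod b$; set $\beta := (x - \alpha a)/b \in \Z$. The nontrivial step is to verify $\beta \geq 0$. For this I would take any nonnegative representation $x = \alpha' a + \beta' b$ (guaranteed by $x \in M$), note that $\alpha' \equiv \alpha \pmod b$ together with $\alpha' \geq 0$ and $0 \leq \alpha < b$ forces $\alpha' = \alpha + kb$ for some integer $k \geq 0$, and then deduce $\beta = \beta' + ka \geq 0$.

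The remaining bound $\beta < a$ follows at once from $\beta b \leq \alpha a + \beta b = x < ab$. I expect the only delicate point to be the nonnegativity of $\beta$, which depends on reconciling the modular choice of $\alpha$ with an arbitrary representation coming from membership in $M$; once that is settled, everything else is a direct substitution into Lemma \ref{lem-8}.
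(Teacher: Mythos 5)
Your proof is correct, but you have taken a detour the paper shows is unnecessary. You construct a distinguished representation by inverting $a$ modulo $b$ and then spend the bulk of the argument verifying that the resulting $\beta$ is nonnegative by comparing against an arbitrary representation coming from $x\in M$. The paper's proof skips this entirely: since $x\in M$, take \emph{any} representation $x=\alpha a+\beta b$ with $\alpha,\beta\geq 0$; then $\alpha a\leq x<ab$ already forces $\alpha<b$, and $\beta b\leq x<ab$ forces $\beta<a$, so Lemma \ref{lem-8} gives $T_M(x)=\lfr{\alpha}{b}+\lfr{\beta}{a}+1=1$ immediately. The point you flag as delicate (nonnegativity of $\beta$) only arises because you insisted on the specific residue-class choice of $\alpha$; the bounds $\alpha<b$, $\beta<a$ hold for every nonnegative representation of an $x<ab$, so no canonical choice is needed. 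What your construction buys, if anything, is an explicit description of the unique representation (the one with $0\leq\alpha<b$), which is mildly more information than the corollary asks for; what it costs is the extra modular bookkeeping. Both arguments rest on the same key lemma, so in substance they coincide, with yours being the longer path to the same substitution.
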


\begin{proof} It's enough to show that if $x\in M$ and $x<ab$, then $T_{M}(x)=1$. By Lemma \ref{lem-8}, this holds iff $\lfr{\alpha}{b}=\lfr{\beta}{a}=0$ where $x=\alpha a+\beta b$ is some representation. But since $x< ab$, then $\alpha a\leq x < ab$ so $\alpha<b$ and $\beta b\leq x < ab$ so $\beta<a$  so 
$\lfr{\alpha}{b}=\lfr{\beta}{a}=0$.
\end{proof}

\begin{lem} \label{lem-10} If $x\in M$, then for all $k\in \N$, $T_{M}(x+kab)=T_{M}(x)+k$
\end{lem}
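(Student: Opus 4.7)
The plan is to derive this as an almost immediate consequence of Lemma \ref{lem-8}. Since $x \in M$, fix any representation $x = \alpha a + \beta b$ with $\alpha, \beta \in \N$. Then for every $k \in \N$, the identity
\[ x + kab = (\alpha + kb)\,a + \beta\,b \]
exhibits a valid representation of $x + kab$ (with nonnegative coefficients $\alpha + kb$ and $\beta$), so $x + kab \in M$ and Lemma \ref{lem-8} applies to this representation.

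Next I would apply the formula from Lemma \ref{lem-8} to both $x$ and $x+kab$ using these representations, obtaining
\[ T_{M}(x) = \lfr{\alpha}{b} + \lfr{\beta}{a} + 1, \qquad T_{M}(x+kab) = \lfr{\alpha+kb}{b} + \lfr{\beta}{a} + 1. \]
The only thing to check is the elementary identity $\lfr{\alpha+kb}{b} = \lfr{\alpha}{b} + k$, which is immediate since $kb$ is a multiple of $b$. Subtracting gives $T_M(x+kab) - T_M(x) = k$, as desired.

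I don't anticipate a real obstacle: the content is entirely packed into Lemma \ref{lem-8}, and once one picks the right representation of $x+kab$ (namely, absorbing the extra $kab$ into the coefficient of $a$), the floor arithmetic collapses at once. A completely equivalent alternative would be to induct on $k$, using at each step the bijection between representations of $x + (k+1)ab$ and representations of $x + kab$ extended by one new representation obtained by shifting $\alpha \mapsto \alpha + b$; but the direct computation via Lemma \ref{lem-8} is cleaner and requires no case analysis.
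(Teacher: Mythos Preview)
Your proof is correct and matches the paper's own argument essentially line for line: fix a representation $x=\alpha a+\beta b$, rewrite $x+kab=(\alpha+kb)a+\beta b$, apply Lemma~\ref{lem-8}, and use $\lfr{\alpha+kb}{b}=\lfr{\alpha}{b}+k$. There is nothing to add.
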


\begin{proof} By Lemma \ref{lem-8}, take  $x=\alpha a+\beta b\in M$ then $x+kab\in M$ and $x+kab=(\alpha+kb) a+\beta b$ and so $T_M(x+kab)=\lfr{\alpha+kb}{b}+\lfr{\beta}{a}+1=k+\lfr{\alpha}{b}+\lfr{\beta}{a}+1= T_{M}(x)+k$.
\end{proof}

\begin{lem}\label{lem-11} If $x\notin M$, $0\leq x\leq ab-1$, then $T_{M}(x+ab)=1$
\end{lem}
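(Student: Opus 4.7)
The plan is to construct an explicit representation $x + ab = \alpha' a + \beta' b$ with $0 \leq \alpha' \leq b - 1$ and $0 \leq \beta' \leq a - 1$, and then apply Lemma \ref{lem-8}, which will immediately give $T_M(x + ab) = \lfr{\alpha'}{b} + \lfr{\beta'}{a} + 1 = 0 + 0 + 1 = 1$.

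To build such a representation, I would first use $\gcd(a, b) = 1$ to pick the unique $\beta \in \{0, 1, \dots, a - 1\}$ satisfying $x \equiv b\beta \pmod{a}$, and then write $x = a\alpha + b\beta$ for some $\alpha \in \Z$. The constraints $0 \leq x \leq ab - 1$ together with $0 \leq b\beta \leq b(a-1) < ab$ force $-(b-1) \leq \alpha \leq b - 1$. The hypothesis $x \notin M$ then rules out $\alpha \geq 0$, since otherwise $(\alpha, \beta) \in \N^2$ would already witness $x \in M$; hence $-(b - 1) \leq \alpha \leq -1$. Setting $\alpha' := \alpha + b$ and $\beta' := \beta$ yields $1 \leq \alpha' \leq b - 1$, $0 \leq \beta' \leq a - 1$, and $x + ab = a(\alpha + b) + b\beta = a\alpha' + b\beta'$, after which Lemma \ref{lem-8} finishes the proof.

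The only slightly delicate point is translating the hypothesis $x \notin M$ into the strict inequality $\alpha < 0$ in the canonical representation; once that observation is in place, the rest is just a bound on $\alpha$ coming from $x \geq 0$ and a single invocation of Lemma \ref{lem-8}. Notice that the argument also shows $x + ab \in M$, which is consistent with the Sylvester--Frobenius bound since the largest gap of $\langle a, b\rangle$ is $ab - a - b < ab$.
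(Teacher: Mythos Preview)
Your argument is correct and lands on the same endpoint as the paper's proof: a representation $x+ab=\alpha' a+\beta' b$ with $0\le\alpha'<b$ and $0\le\beta'<a$, followed by Lemma~\ref{lem-8} to conclude $T_M(x+ab)=\lfr{\alpha'}{b}+\lfr{\beta'}{a}+1=1$. The paper reaches that representation slightly differently: it first cites the Frobenius number $ab-a-b$ to ensure $x+ab\in M$, and then argues by contradiction that \emph{every} representation must have $\alpha<b$ and $\beta<a$ (else subtracting $ab$ would place $x$ in $M$). Your route is a touch more self-contained, since the explicit construction via the residue $\beta\in\{0,\dots,a-1\}$ simultaneously produces the representation and the bounds, so no appeal to the Frobenius number is needed; the paper's route, on the other hand, makes clearer that no representation with a large coefficient can exist.
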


\begin{proof} Since $x+ab\geq ab$ and the Frobenius number of $M$ is $ab-a-b$ (\cite{RS}), $x+ab$ belongs to $M$, so $T_{M}(x+ab)\geq 1$. Now, as in Corollary \ref{cor-9}, it's enough to show that in any representation of $x+ab=\alpha a + \beta b$, both $\alpha < b$ and $\beta < a$. If not, say $\alpha\geq b$, then $x=\alpha a + \beta b -ab = (\alpha-b)a + \beta b$ which says that $x\in M$, contradiction. In the same way, $\beta < a$ and we're done.
\end{proof}


\begin{thm} For all $x\in \N, T_{M}(x+ab)=T_{M}(x)+1$ and Theorem \ref{thm-5} is valid. 
\end{thm}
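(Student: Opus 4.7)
The plan is to split on whether $x \in M$ and then invoke the already-proven lemmas, after which Theorem \ref{thm-5} follows by iteration.

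First, for the recursion $T_M(x+ab)=T_M(x)+1$, I would do a case analysis. If $x \in M$, then Lemma \ref{lem-10} applied with $k=1$ gives the identity immediately. If $x \notin M$, then $T_M(x)=0$ by definition, so it suffices to show $T_M(x+ab)=1$. The key observation here is that since the Frobenius number of $\langle a,b\rangle$ equals $ab-a-b$, any $x \notin M$ must satisfy $x \leq ab-a-b \leq ab-1$, so Lemma \ref{lem-11} applies directly and yields $T_M(x+ab)=1$, as needed.

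For the second half of the statement, I would deduce Theorem \ref{thm-5} by iterating the recursion. Writing $x = ab\lfr{x}{ab} + \fra{x}{ab}$ with $0 \leq \fra{x}{ab} < ab$, an induction on $\lfr{x}{ab}$ using the identity just established gives
\[
T_M(x) = T_M(\fra{x}{ab}) + \lfr{x}{ab}.
\]
It remains to identify $T_M(\fra{x}{ab})$ with $\epsilon_{\langle a,b\rangle}(\fra{x}{ab})$. Since $0 \leq \fra{x}{ab} < ab$, Corollary \ref{cor-9} tells us $T_M(\fra{x}{ab})=1$ precisely when $\fra{x}{ab} \in M$; and when $\fra{x}{ab} \notin M$ we have $T_M(\fra{x}{ab})=0$ directly from the definition. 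Either way the value equals the indicator $\epsilon_{\langle a,b\rangle}(\fra{x}{ab})$, completing the formula.

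The main obstacle is really the $x \notin M$ case of the recursion, but this is already encapsulated in Lemma \ref{lem-11} together with the classical formula $ab-a-b$ for the Frobenius number, which bounds such $x$ below $ab$. Once this structural remark is made, the rest is a straightforward bookkeeping argument: use the recursion $\lfr{x}{ab}$ times to reduce to the fundamental domain $[0,ab)$, and read off the base-case value from Corollary \ref{cor-9}.
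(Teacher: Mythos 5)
Your proof is correct, and for the recursion $T_M(x+ab)=T_M(x)+1$ it takes a genuinely more direct route than the paper. The paper argues by minimal counterexample: Lemma \ref{lem-10} forces a counterexample to lie outside $M$, Lemma \ref{lem-11} forces it to be at least $ab$, and then minimality applied to $y=x-ab$ gives $T_M(x)=T_M(y)+1\geq 1$, hence $x\in M$, a contradiction. You instead split directly on membership: Lemma \ref{lem-10} with $k=1$ handles $x\in M$, and for $x\notin M$ you note that every non-element of $M$ is at most the Frobenius number $ab-a-b\leq ab-1$ (a fact the paper itself cites in the proof of Lemma \ref{lem-11}), so Lemma \ref{lem-11} applies and yields $T_M(x+ab)=1=T_M(x)+1$ since $T_M(x)=0$ off $M$. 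This removes the induction/contradiction step entirely, at the price of invoking the Frobenius bound one extra time; the paper's version only ever uses Lemma \ref{lem-11} in its stated range $0\leq x\leq ab-1$ and, in effect, re-derives through minimality the fact that a counterexample outside $M$ cannot occur. The second half of your argument --- iterating the recursion to get $T_M(x)=\lfr{x}{ab}+T_M(\fra{x}{ab})$ and evaluating the base case via Corollary \ref{cor-9} together with $T_M=0$ outside $M$ --- coincides with the paper's deduction of Theorem \ref{thm-5}.
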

\begin{proof} By contradiction, take $x$ minimal such that this formula is not valid for $x$. Notice that by Lemma \ref{lem-11}, we have $x\geq ab$, and by Lemma \ref{lem-10}, we have $x\notin M$. Let $y=x-ab < x$ then by minimality, the theorem is valid for $y$ and so $T_{M}(x)=T_{M}(y+ab)=T_{M}(y)+1\geq 1$ in particular $x\in M$, contradiction. \\
Since $\forall x\in \N, T_{M}(x+ab)=T_{M}(x)+1$, by induction $\forall x, k\in \N$, $T_{M}(x+kab)=T_{M}(x)+k$, and so $T_{M}(x)=T_{M}(\fra{x}{ab}+ab\lfr{x}{ab})= \lfr{x}{ab} + T_{M}(\fra{x}{ab})= \lfr{x}{ab} + \epsilon_{\langle a, b\rangle}(\fra{x}{ab})$ the last equality since $\fra{x}{ab} < ab$ and using Corollary \ref{cor-9}. This proves the theorem. 

\end{proof}


\section{Dimension $n=3$}

Assume $\gcd(a,b)=\gcd(a,c)=\gcd(b,c)=1$. These conditions are stronger than necessary, but the theory of numerical monoids permits one to reduce to this case in any dimension \cite{RS}. 
\vspace{2 mm}

Let $M = \langle a, b, c \rangle$, $M_{\widehat{c}}= \langle a, b \rangle$ dimension $3$ and $2$ numerical monoids.\\ 

Recall $\displaystyle T_{M} (x) = \sum_{l=0}^{\lfr{x}{c}} T_{\langle a, b\rangle}(\fra{x}{c}+lc)$

Let $y=\lfr{x}{c}$. Then $y= \fra{y}{ab} + ab\lfr{x}{abc}$. By Theorem \ref{thm-5}, $T_{\langle a, b\rangle}$ splits as sum, consequently we split the sum $\displaystyle  \sum_{l=0}^{\lfr{x}{c}} T_{\langle a, b\rangle}(\fra{x}{c}+lc) = \sum_{l=0}^{\lfr{x}{c}} \lfr{\fra{x}{c}+lc}{ab} + \sum_{l=0}^{\lfr{x}{c}}  \epsilon_{\langle a, b\rangle}(\fra{\fra{x}{c}+cl}{ab})= S1 + S2$\\

$S1= \displaystyle \sum_{l=0}^{\lfr{x}{c}} \lfr{\fra{x}{c}+lc}{ab} =   \sum_{i=0}^{\lfr{x}{abc} - 1} \sum_{j=0}^{ab-1} \lfr{\fra{x}{c}+(abi+j)c}{ab} +  \sum_{j=0}^{\fra{y}{ab}} \lfr{\fra{x}{c}+(ab\lfr{x}{abc}+j)c}{ab} \\= \sum_{i=0}^{\lfr{x}{abc} - 1} \sum_{j=0}^{ab-1} \lfr{\fra{x}{c}+jc}{ab}  +  \sum_{i=0}^{\lfr{x}{abc} - 1} \sum_{j=0}^{ab-1} ic+ \sum_{j=0}^{\fra{y}{ab}} \lfr{\fra{x}{c}+jc}{ab}+ \sum_{j=0}^{\fra{y}{ab}} \lfr{x}{abc}c \\=  \lfr{x}{abc}\sum_{j=0}^{ab-1} \lfr{\fra{x}{c}+jc}{ab}  + \frac{abc}{2}\left(\lfr{x}{abc}-1\right)\lfr{x}{abc} + \sum_{j=0}^{\fra{y}{ab}} \lfr{\fra{x}{c}+jc}{ab}+ (\fra{y}{ab}+1) \lfr{x}{abc}c $

$S2= \displaystyle \sum_{l=0}^{\lfr{x}{c}} \epsilon_{\langle a, b\rangle}(\fra{\fra{x}{c}+cl}{ab})= \sum_{i=0}^{\lfr{x}{abc} - 1} \sum_{j=0}^{ab-1} \epsilon_{\langle a, b\rangle}(\fra{\fra{x}{c}+(abi+j)c}{ab})  +  \sum_{j=0}^{\fra{y}{ab}} \epsilon_{\langle a, b\rangle}(\fra{\fra{x}{c}+(ab\lfr{x}{abc}+j)c}{ab}) =  \sum_{i=0}^{\lfr{x}{abc} - 1} \sum_{j=0}^{ab-1} \epsilon_{\langle a, b\rangle}(\fra{\fra{x}{c}+jc}{ab})  +  \sum_{j=0}^{\fra{y}{ab}} \epsilon_{\langle a, b\rangle}(\fra{\fra{x}{c}+jc}{ab}) \\=  \lfr{x}{abc} \sum_{j=0}^{ab-1} \epsilon_{\langle a, b\rangle}(\fra{\fra{x}{c}+jc}{ab})  +  \sum_{j=0}^{\fra{y}{ab}} \epsilon_{\langle a, b\rangle}(\fra{\fra{x}{c}+jc}{ab}) $ \\

Using Hermite reciprocity, since $\gcd(ab,c)=1$:  \\ $\displaystyle \sum_{j=0}^{ab-1} \lfr{\fra{x}{c}+jc}{ab} = \sum_{j=0}^{c-1} \lfr{\fra{x}{c}+jab}{c} =  \sum_{j=0}^{c-1} \lfr{\fra{x}{c}+\fra{jab}{c} + c\lfr{jab}{c}   }{c} =  \sum_{j=0}^{c-1} \lfr{\fra{x}{c}+\fra{jab}{c} }{c} + \sum_{j=0}^{c-1}\lfr{jab}{c} =  \sum_{l=0}^{c-1} \lfr{\fra{x}{c}+l }{c} + \sum_{j=0}^{c-1}\lfr{jab}{c} =  \floor*{c\frac{\fra{x}{c}}{c}}+\frac{(ab-1)(c-1)}{2} = \fra{x}{c}+\frac{(ab-1)(c-1)}{2} $ \\

Hence we obtain $S1=$ \\
 
$\displaystyle \sum_{j=0}^{\fra{y}{ab}} \lfr{\fra{x}{c}+jc}{ab} + \lfr{x}{abc}\sum_{j=0}^{ab-1} \lfr{\fra{x}{c}+jc}{ab}  + \frac{abc}{2}\left(\lfr{x}{abc}-1\right)\lfr{x}{abc} + (\fra{y}{ab}+1) \lfr{x}{abc}c \\= \sum_{j=0}^{\fra{y}{ab}} \lfr{\fra{x}{c}+jc}{ab} + \lfr{x}{abc}\left(\fra{x}{c}+\frac{(ab-1)(c-1)}{2}\right)+ \frac{abc}{2}\left(\lfr{x}{abc}-1\right)\lfr{x}{abc} + \\ (\fra{y}{ab}+1) \lfr{x}{abc}c = \sum_{j=0}^{\fra{y}{ab}} \lfr{\fra{x}{c}+jc}{ab} +  \lfr{x}{abc}\left( \frac{abc}{2}\left(\lfr{x}{abc}-1\right) + \\ \fra{x}{c}+\frac{(ab-1)(c-1)}{2} + \\ (\fra{y}{ab}+1)c  \right) \\=   \sum_{j=0}^{\fra{y}{ab}} \lfr{\fra{x}{c}+jc}{ab} + S3$, where $S3$ is the sum 
\\  $\displaystyle S3= \lfr{x}{abc}\left( \frac{abc}{2}\left(\lfr{x}{abc}-1\right) + \fra{x}{c}+\frac{(ab-1)(c-1)}{2} +  (\fra{y}{ab}+1)c  \right) \\= \lfr{x}{abc}\left( \frac{abc}{2}\left(\lfr{x}{abc}-1\right) + \fra{x}{c}+\frac{(ab-1)(c-1)}{2} + \\ (\fra{\lfr{x}{c}}{ab}+1)c  \right) \\= \lfr{x}{abc}\left( \frac{abc}{2}\left(\lfr{x}{abc}-1\right) + x-c\lfr{x}{c}+\frac{(ab-1)(c-1)}{2} +  (\lfr{x}{c}-ab\lfr{x}{abc}+1)c  \right) \\= \lfr{x}{abc}\left( \frac{abc}{2}\lfr{x}{abc}-\frac{abc}{2} + x-c\lfr{x}{c}+\frac{abc-ab-c+1}{2} + c\lfr{x}{c}-abc\lfr{x}{abc}+c  \right) \\= \lfr{x}{abc}\left(x-\frac{abc}{2}\lfr{x}{abc} +\frac{c-ab+1}{2} \right) = \lfr{x}{abc}\left(\frac{abc}{2}\lfr{x}{abc} + \fra{x}{abc} + \frac{c-ab+1}{2} \right)$

Finally we get $\displaystyle S1=  \lfr{x}{abc}\left(\frac{abc}{2}\lfr{x}{abc} + \fra{x}{abc} + \frac{c-ab+1}{2} \right) +  \sum_{j=0}^{\fra{\lfr{x}{c}}{ab}} \lfr{\fra{x}{c}+jc}{ab} \\=  \frac{abc}{2}\lfr{x}{abc}^2 +\lfr{x}{abc}\left( \fra{x}{abc} + \frac{c-ab+1}{2} \right) +\sum_{j=0}^{\fra{\lfr{x}{c}}{ab}} \lfr{\fra{x}{c}+jc}{ab}$\\
Notice that the main term is $\displaystyle  \frac{abc}{2}\lfr{x}{abc}^2$ which has order $\displaystyle \sim \frac{x^2}{2abc}$.\\

The entire result is then $\displaystyle S1+S2 \\=  \frac{abc}{2}\lfr{x}{abc}^2 +\lfr{x}{abc}\left( \fra{x}{abc} + \frac{c-ab+1}{2} \right) +\sum_{j=0}^{\fra{\lfr{x}{c}}{ab}} \lfr{\fra{x}{c}+jc}{ab} + \\ \lfr{x}{abc} \sum_{j=0}^{ab-1} \epsilon_{\langle a, b\rangle}(\fra{\fra{x}{c}+jc}{ab})  +  \sum_{j=0}^{\fra{y}{ab}} \epsilon_{\langle a, b\rangle}(\fra{\fra{x}{c}+jc}{ab})$. \\ 

Each of the sums have easy to find bounds: 
\begin{itemize}

\item $\displaystyle \fra{x}{abc} + \frac{c-ab+1}{2} < abc+\frac{c-ab+1}{2}$. 

\item $\displaystyle \sum_{j=0}^{\fra{\lfr{x}{c}}{ab}} \lfr{\fra{x}{c}+jc}{ab} \leq \displaystyle \sum_{j=0}^{ab-1} \lfr{\fra{x}{c}+jc}{ab} = \fra{x}{c}+\frac{(ab-1)(c-1)}{2} \leq c+\frac{(ab-1)(c-1)}{2}$.

\item $\displaystyle \sum_{j=0}^{ab-1} \epsilon_{\langle a, b\rangle}(\fra{\fra{x}{c}+jc}{ab}) \leq  \sum_{j=0}^{ab-1} 1= ab$

\item $\displaystyle \sum_{j=0}^{\fra{y}{ab}} \epsilon_{\langle a, b\rangle}(\fra{\fra{x}{c}+jc}{ab}) \leq  \sum_{j=0}^{ab-1} \epsilon_{\langle a, b\rangle}(\fra{\fra{x}{c}+jc}{ab}) \leq ab$

\end{itemize}

Hence we obtain:

\begin{thm}\label{thm-13} $n=3$, $M = \langle a, b, c \rangle$, $\gcd(a,b)=\gcd(a,c)=\gcd(b,c)=1$. We have $$T_{M}(x) = \frac{abc}{2}\lfr{x}{abc}^2 + r_{M}(x)$$ where $|r_M(x)|\leq Kx+L$ for absolute constants $K, L$ (depending only on $a, b, c$).

\end{thm}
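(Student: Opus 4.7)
The plan is to bootstrap from the $n=2$ formula (Theorem \ref{thm-5}) by means of the recursive identity from Corollary \ref{cor-4}. First I would expand
\[
T_M(x) = \sum_{l=0}^{\lfr{x}{c}} T_{\langle a,b\rangle}\!\left(\fra{x}{c}+lc\right)
\]
and substitute $T_{\langle a,b\rangle}(y)=\lfr{y}{ab}+\epsilon_{\langle a,b\rangle}(\fra{y}{ab})$, splitting the problem into a floor sum $S_1$ and an indicator sum $S_2$. Only $S_1$ can contribute to the quadratic main term, since every summand of $S_2$ is in $\{0,1\}$.

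Second, I would use Proposition \ref{prop-1}(5) to write $\lfr{\lfr{x}{c}}{ab}=\lfr{x}{abc}$ and decompose the range $0\le l\le\lfr{x}{c}$ as $l=abi+j$ with $0\le i\le \lfr{x}{abc}-1$ and $0\le j\le ab-1$, plus a tail $l=ab\lfr{x}{abc}+j$ with $0\le j\le \fra{\lfr{x}{c}}{ab}$. The key observation is that both the inner $\fra{\cdot}{ab}$ (in $S_2$) and, after pulling out the integer $ic$, the floor $\lfr{\fra{x}{c}+jc}{ab}$ (in $S_1$) depend only on $j$ and not on $i$: shifting the numerator by $abic$ adds the integer $ic$ to the floor and leaves the residue class modulo $ab$ untouched. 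This reduces the $i$-summation to an arithmetic progression.

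Third, the quadratic main term emerges from $\sum_i\sum_j ic = abc\cdot\binom{\lfr{x}{abc}}{2}$, which produces $\frac{abc}{2}\lfr{x}{abc}^2$ plus a term linear in $\lfr{x}{abc}$. To control the leftover coefficient of $\lfr{x}{abc}$, I would evaluate
\[
\sum_{j=0}^{ab-1}\lfr{\fra{x}{c}+jc}{ab} = \fra{x}{c}+\frac{(ab-1)(c-1)}{2}
\]
via Hermite reciprocity (valid since $\gcd(ab,c)=1$), swapping the roles of $ab$ and $c$ so that the sum collapses. Since $\fra{x}{c}<c$, this quantity is bounded by a constant depending only on $a,b,c$.

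Finally, the remaining pieces --- the tail sum in $S_1$, the bulk and tail indicator sums in $S_2$ --- each consist of at most $ab$ terms whose summands are bounded by $c-1$ (in $S_1$) or $1$ (in $S_2$); hence each is $O(1)$ with implicit constant depending only on $a,b,c$. Multiplying by the prefactor $\lfr{x}{abc}\le x/(abc)$ produces an error of the form $Kx+L$. The main obstacle will be purely bookkeeping: carefully tracking which contributions from $S_1$ feed into the quadratic main term versus the linear remainder, and ensuring Hermite reciprocity is applied correctly in the presence of the extra shift by $\fra{x}{c}$. Once these two ingredients are in place, the bound $|r_M(x)|\le Kx+L$ follows directly from collecting terms.
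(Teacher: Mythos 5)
Your proposal is correct and follows essentially the same route as the paper: expand via Corollary \ref{cor-4}, substitute Theorem \ref{thm-5} to split into the floor sum $S_1$ and indicator sum $S_2$, decompose the index range as $l=abi+j$ plus a tail, extract the quadratic term $\frac{abc}{2}\lfr{x}{abc}^2$ from $\sum_{i,j} ic$, evaluate $\sum_{j=0}^{ab-1}\lfr{\fra{x}{c}+jc}{ab}$ by Hermite reciprocity, and bound every remaining piece by a constant or by a constant times $\lfr{x}{abc}$. This is exactly the computation the paper carries out before stating Theorem \ref{thm-13}, so no further comparison is needed.
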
 

\section{Dimension $n$ arbitrary}

The technique above generalizes, by summing up the leading terms using Corollary \ref{cor-4} obtaining:

\begin{thm} \label{thm-14} For $M=\langle a_1, ... , a_d\rangle$, $\gcd(a_i,a_j)=1$ for all $i\neq j$. The counting function $T_{M}(x)$ has main order $$T_M(x) \sim \frac{(a_1....a_d)^{d-2}}{(d-1)!}\lfr{x}{a_1.... a_d}^{d-1}$$ meaning that $T_{M}(x)=\frac{(a_1...a_d)^{d-2}}{(d-1)!}\lfr{x}{a_1.... a_d}^{d-1} + r_{M}(x)$, where $|r_{M}(x)|$ is bounded by a polynomial in $x$ (depending only on $M$) of degree at most $d-2$.
\end{thm}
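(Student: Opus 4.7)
The plan is induction on the dimension $d$, with base case $d=2$ given by Theorem \ref{thm-5}: the error $\epsilon_{\langle a,b\rangle}(\fra{x}{ab})$ is bounded by the constant $1$, which is a polynomial of degree $0=d-2$. For the inductive step, let $M=\langle a_1,\ldots,a_d\rangle$, set $B=a_1\cdots a_{d-1}$ and $A=Ba_d$, and assume by induction that
\[
T_{M_{\widehat{a_d}}}(y)=\frac{B^{d-3}}{(d-2)!}\lfr{y}{B}^{d-2}+r'(y),
\]
with $|r'(y)|$ bounded by some polynomial $P(y)$ of degree at most $d-3$ depending only on $a_1,\ldots,a_{d-1}$. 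Corollary \ref{cor-4} applied with $a_n=a_d$ lets us substitute this into
\[
T_M(x)=\sum_{l=0}^{\lfr{x}{a_d}} T_{M_{\widehat{a_d}}}\!\left(\fra{x}{a_d}+la_d\right),
\]
splitting $T_M(x)$ into a main sum (from the leading term of the induction hypothesis) plus an error sum (from $r'$).

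For the main sum, following the $d=3$ blueprint, set $Q=\lfr{x}{A}$ and decompose the outer index as $l=Bi+j$ with $0\le i<Q$ and $0\le j<B$, plus a partial leftover $l=BQ+j$ with $0\le j\le \fra{\lfr{x}{a_d}}{B}$. The identity
\[
\lfr{\fra{x}{a_d}+(Bi+j)a_d}{B} = ia_d + u_j(x), \qquad u_j(x):=\lfr{\fra{x}{a_d}+ja_d}{B},
\]
combined with a binomial expansion of $(ia_d+u_j(x))^{d-2}$ and Faulhaber's formula $\sum_{i=0}^{Q-1}i^{d-2}=\tfrac{Q^{d-1}}{d-1}+O(Q^{d-2})$ isolates the top-degree contribution. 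Summing over the $B$ values of $j$ and multiplying by the prefactor $B^{d-3}/(d-2)!$ produces
\[
\frac{B^{d-3}}{(d-2)!}\cdot B\cdot\frac{a_d^{d-2}Q^{d-1}}{d-1}=\frac{A^{d-2}}{(d-1)!}\lfr{x}{A}^{d-1},
\]
which is the claimed main term. Because $u_j(x)\le a_d$ (one checks $\fra{x}{a_d}+ja_d<Ba_d$ for $0\le j<B$), the subordinate binomial terms and the partial leftover block each contribute at most $O(Q^{d-2})=O(x^{d-2})$.

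The main obstacle is the error bookkeeping, and specifically the accumulated induction error $\sum_{l=0}^{\lfr{x}{a_d}} r'(\fra{x}{a_d}+la_d)$. This sum has $O(x)$ terms, each bounded in absolute value by $P(\fra{x}{a_d}+la_d)=O(x^{d-3})$ since $\fra{x}{a_d}+la_d\le x$ and $P$ has degree $\le d-3$; hence its total contribution is $O(x^{d-2})$. Combined with the $O(x^{d-2})$ errors coming from the leftover block and from the lower-order binomial terms, this yields a polynomial remainder $r_M(x)$ of degree at most $d-2$ with constants depending only on $a_1,\ldots,a_d$. With this uniform control the induction closes and Theorem \ref{thm-14} follows.
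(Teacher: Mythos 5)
Your proposal is correct and is essentially the paper's intended argument: the paper gives no detailed proof of Theorem \ref{thm-14}, stating only that the dimension-$3$ technique ``generalizes, by summing up the leading terms using Corollary \ref{cor-4}'', and your induction on $d$ --- base case Theorem \ref{thm-5}, the decomposition $l=Bi+j$ with leftover block, the identity $\lfr{\fra{x}{a_d}+(Bi+j)a_d}{B}=ia_d+u_j(x)$, Faulhaber for the top term, and the $O(x^{d-2})$ bookkeeping of the accumulated error $\sum_l r'(\fra{x}{a_d}+la_d)$ --- is exactly that generalization, carried out correctly (the prefactor arithmetic $\frac{B^{d-3}}{(d-2)!}\cdot B\cdot\frac{a_d^{d-2}}{d-1}=\frac{A^{d-2}}{(d-1)!}$ checks out). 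In effect you have supplied the details the paper leaves implicit, so no further changes are needed.
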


\Addresses

\end{document}